\newtheorem{theorem}{Theorem}[section]
\newcommand{\ie}{i.\hspace{.5pt}e.\ }
\newcommand{\f}{\varphi}
\newcommand{\g}{\tilde{g}}
\newcommand{\bg}{\bar{g}}
\newcommand{\n}{\nabla}
\newcommand{\M}{(\mathcal{M},\A\f,\A\xi,\A\eta,\A{}g)}
\newcommand{\MM}{\mathcal{M}}
\newcommand{\I}{\iota}
\newcommand{\R}{\mathbb R}
\newcommand{\X}{\mathfrak X}
\newcommand{\F}{\mathcal{F}}
\newcommand{\LL}{\mathcal{L}}
\newcommand{\ta}{\theta}
\newcommand{\om}{\omega}
\newcommand{\lm}{\lambda}
\newcommand{\sm}{\sigma}
\newcommand{\gm}{\gamma}
\newcommand{\al}{\alpha}
\newcommand{\bt}{\beta}
\newcommand{\D}{\mathrm{d}}
\DeclareMathOperator{\Span}{span} 
\DeclareMathOperator{\im}{im} 
\newcommand{\ddu}[1]{\dfrac{\partial u}{\partial x^{#1}}}
\newcommand{\ddv}[1]{\dfrac{\partial v}{\partial x^{#1}}}
\newcommand{\p}{\partial}
\newcommand{\thmref}[1]{Theorem~\ref{#1}}
\newcommand{\A}{\allowbreak{}}
\begin{document}

\title[Yamabe Solitons and Vertical Torse-Forming  Vector Fields ...]
{Yamabe Solitons on Conformal Almost Contact Complex Riemannian Manifolds with Vertical Torse-Forming Vector Field}

\author{Mancho Manev}

\address[M. Manev]{
University of Plovdiv Paisii Hilendarski,
Faculty of Mathematics and Informatics,
Department of Algebra and Geometry,
24 Tzar Asen St.,
Plovdiv 4000, Bulgaria
\&
Medical University -- Plovdiv,
Faculty of Pharmacy,
Department of Medical Physics and Biophysics,
15A Vasil Aprilov Blvd,
Plovdiv 4002, Bulgaria}
\email{mmanev@uni-plovdiv.bg}

\begin{abstract}
A Yamabe soliton is considered on an almost contact complex Riemannian manifold (also known as an almost contact B-metric manifold) which is obtained by a contact conformal transformation of the Reeb vector field, its dual contact 1-form, the B-metric, and its associated B-metric.
The case when the potential is a torse-forming vector field of constant length on the vertical distribution determined by the Reeb vector field is studied.
In this way, manifolds from one of the main classes of the studied manifolds are obtained. The same class contains the conformally equivalent manifolds of cosymplectic manifolds by the usual conformal transformation of the given B-metric.
An explicit 5-dimensional example of a Lie group is given, which is characterized in relation to the obtained results.
\end{abstract}

\keywords{Yamabe soliton; almost contact B-metric manifold; almost con\-tact complex Riemannian manifold; torse-forming vector field; contact conformal trans\-for\-mations.}

\subjclass[2010]{
Primary  
53C25, 
53D15, 
53C50; 
Secondary
53E50  
53D35, 
70G45} 

\maketitle


\section{Introduction}

The concept of Yamabe flow was first introduced by R.{ }Hamilton in \cite{Ham88} to construct
Yamabe metrics on compact Riemannian manifolds. On a Riemannian or pseudo-Riemannian manifold, a time-dependent metric $g(t)$ is said to evolve by the Yamabe flow if $g$ satisfies the equation
\[
\frac{\p}{\p t}g=-\tau g,
\]
where $\tau(t)$ is the scalar curvature of $g(t)$.
According to mathematical physics, the Yamabe flow corresponds to the case of fast diffusion of the porous medium equation \cite{ChLuNi}.

A self-similar solution of the Yamabe flow is called a Yamabe soliton and is determined by (\cite{BarRib})
\begin{equation*}\label{YS-intro}
  \frac12 \LL_{v} g = (\tau - \sm) g,
\end{equation*}
where $\LL_{v} g$ is the Lie derivative of $g$ along a vector field $v$ and $\sm$ is a constant.
Various authors have studied Yamabe solitons on manifolds equipped with different types of structures   (see e.g. \cite{Cao1,Chen1,Das1,Gho1,HuiMan18,Roy1}).

In \cite{Man73}, we began the study of Yamabe solitons on \emph{almost contact B-metric manifolds}.
These manifolds are also known as \emph{almost contact complex Riemannian manifolds}, abbreviated as \emph{accR manifolds}.
The geometry of these manifolds is largely determined by the presence of a pair of B-metrics that are related each other by the almost contact structure.

Recall that the conformal class of the metric is preserved by the Yamabe flow.
With this reason in mind, we study Yamabe solitons and conformal transformations together in the case under consideration.

The so-called contactly conformal transformations were studied in \cite{Man4}.
They deformed not only the metric but also the Reeb vector field and its associated contact 1-form using the pair of B-metrics.
The partial case when this transformation changes only the B-metrics is studied in \cite{ManGri1}.

The Ganchev--Mihova--Gribachev classification of the studied manifolds given in \cite{GaMiGr} consists of
eleven basic classes. We call four of them the main classes, since
the covariant derivatives of the structure tensors with respect to the Levi-Civita connection of each of the B-metrics are expressed only by the pair of B-metrics and the corresponding traces.
Let us recall \cite{ManGri1}, where it is proved that the direct sum of the four main classes
is closed under the action of contactly conformal transformations.

The main results in the present paper concern two of the four main classes, namely $\F_1$ and $\F_5$, as well as their intersection $\F_0$. 
The first of them is defined in \cite{GaMiGr} as an analogue of the unique main class $\mathcal{W}_1$ of almost complex manifolds with Norden metric according to the Ganchev--Borisov classification in \cite{GaBo}.
Furthermore, it is known from \cite{ManGri1} that any $\F_0$-manifold can be transformed into an $\F_1$-manifold by a usual conformal transformation of the B-metric.
An example of an $\F_5$-manifold as an isotropic hypersurface with respect to the associated B-metric in an even-dimensional real space is given in \cite{GaMiGr} and it is noted that the class $\F_5$ is analogous in some sense to the class of $\al$-Sasakian manifolds (we can add---to the class of $\bt$-Kenmotsu manifolds) in the theory of almost contact metric manifolds.
Results on the geometric properties of manifolds from these main classes, as well as examples of them, can be found e.g. in \cite{HMan}--\cite{ManIv38}.

In \cite{Man73} we introduced Yamabe solitons on accR manifolds and started their study for two of the simplest types of these manifolds, namely cosymplectic and Sasaki-like.
We found that the resulting manifolds in both cases belong to one of the main classes, the only one that contains the conformally equivalent manifolds of the cosymplectic ones by the usual conformal transformations.

The aim of this paper is to study another possibility for the initial manifold, which is determined by a natural condition, also intensively studied in relation with solitons, namely the use of a torse-forming vector field.

The structure of the present paper is as follows.
Section 1 is the present introduction.
Section 2 recalls the basic facts for the investigated manifolds, the relevant transformations of the structure tensors on them, and the notion of the Yamabe soliton
on a transformed accR manifold.
Section 3 is devoted to the investigation of the case described in the title of the paper.
Section 4 supports the main theorem by providing an explicit example of a hypersurface as a manifold equipped with the investigated structures and having arbitrary dimension.

\section{Preliminaries}

\subsection{Almost contact complex Riemannian manifolds}

Let $\M$ be an almost contact manifold with
B-metric, also known as an \emph{almost contact complex Riemannian manifold} (abbreviated accR manifold)  (see e.g. \cite{GaMiGr,IvMaMa45}).
This means that $\MM$ is
a $(2n+1)$-dimensional differen\-tia\-ble manifold with an almost
contact structure $(\f,\xi,\eta)$
and a pseudo-Riemannian metric
$g$  of signature $(n+1,n)$ such that  \cite{GaMiGr}
\begin{gather}
\f\xi = 0,\quad \f^2 = -\I + \eta \otimes \xi,\quad
\eta\circ\f=0,\quad \eta(\xi)=1, \label{str}\\%
g(X, Y ) = - g(\f X, \f Y ) + \eta(X)\eta(Y) \label{g}
\end{gather}
for arbitrary $X$, $Y$ of the algebra $\X(\MM)$ on the smooth vector
fields on $\MM$, where $\I$ denotes the identity on $\X(\MM)$.

From now on, we denote by $X$, $Y$, $Z$ arbitrary elements of
$\X(\MM)$ or vectors in the tangent space $T_p\MM$ of $\MM$ at an arbitrary
point $p$ in $\MM$.

The given B-metric has its associated B-metric $\tilde{g}$ on $\M$ defined by
\begin{equation*}\label{tg}
\tilde{g}(X,Y)=g(X,\f Y)\allowbreak+\eta(X)\eta(Y).
\end{equation*}

A classification of the accR manifolds
is given in \cite{GaMiGr} by Ganchev, Mihova and Gribachev. This classification
includes eleven basic classes $\F_1$, $\F_2$, $\dots$, $\F_{11}$.
Their intersection is the special class $\F_0$ defined by
condition for the vanishing of $F$. Hence $\F_0$ is the class of cosymplectic accR manifolds,
where the structures $\f$, $\xi$, $\eta$, $g$, $\tilde{g}$ are $\n$-parallel.

This classification is made in terms of the tensor $F$ of type (0,3) defined on $\M$ by
\begin{equation*}\label{F=nfi}
F(X,Y,Z)=g\bigl( \left( \nabla_X \f \right)Y,Z\bigr),
\end{equation*}
where $\n$ is the Levi-Civita connection of $g$.
The following properties of $F$ are consequences of \eqref{str} and \eqref{g}:
\begin{equation*}\label{F-prop}
F(X,Y,Z)=F(X,Z,Y)
=F(X,\f Y,\f Z)+\eta(Y)F(X,\xi,Z) +\eta(Z)F(X,Y,\xi)
\end{equation*}
and relations of $F$ with $\n\xi$ and $\n\eta$ are:
\begin{equation}\label{FXieta}
    \left(\n_X\eta\right)Y=g\left(\n_X\xi,Y\right)=F(X,\f Y,\xi).
\end{equation}
The following 1-forms are associated with $F$ and known as Lee forms of the manifold:
\begin{equation}\label{tataom}
    \ta=g^{ij}F(E_i,E_j,\cdot),\qquad \ta^*=g^{ij}F(E_i,\f E_j,\cdot),\qquad \om=F(\xi,\xi,\cdot),
\end{equation}
where $g^{ij}$ are the contravariant components of $g$
with respect to a basis $\left\{E_i;\xi\right\}$
$(i=1,2,\dots,2n)$ of $T_p\MM$.
These Lee forms satisfy the following general identities \cite{ManGri1}
\begin{equation}\label{tataom=id}
    \ta^*\circ \f=-\ta\circ\f^2,\qquad \om(\xi)=0.
\end{equation}

\subsection{Conformal transformations of the accR structure}

The author and K.\ Gribachev introduced the so-called \emph{contactly conformal transformation of the B-metric} in \cite{ManGri1},
using the pair of B-metrics $g$ and $\g$, as well as $\eta\otimes\eta$.
This transformation deforms $g$ into a new B-metric $\bar g$ for $\M$.
This transformation is later generalized in \cite{Man4} to a \emph{contact conformal transformation of the accR structure}, which yields a new almost contact structure with B-metric $(\f,\bar\xi,\bar \eta, \bar g)$ as follows
\begin{equation}\label{cct}
\begin{array}{l}
\bar\xi=e^{-w}\xi,\qquad \bar\eta=e^{w}\eta,\\[4pt]
\bar g= e^{2u}\cos{2v}\, g + e^{2u}\sin{2v}\, \g + \left(e^{2w}-e^{2u}\cos{2v}-e^{2u}\sin{2v}\right)\eta\otimes\eta,
\end{array}
\end{equation}
where $u, v, w$ are differentiable functions on $\MM$. Hereafter, we call a transformation of this type an \emph{accR transformation} for short.

Expression of the corresponding tensor $\bar F$ by $F$ under accP transformation is given in \cite{Man4}, see also \cite[(22)]{ManIv38}.
Moreover, the relations between the Lee forms of the manifolds $\M$ and $(\MM,\f,\bar\xi,\bar\eta,\bar g)$ are the following 
\begin{equation}\label{ttbartt}
    \bar{\ta} = \ta+2n\, \al,\qquad
    \bar{\ta}^* = \ta^* +2n\,\bt,\qquad
    \bar{\om}  = \om +\D w\circ \f,
\end{equation}
where we use the following notations for  brevity
\begin{equation}\label{albt}
\al = \D u\circ \f + \D v, \qquad \bt = \D u - \D v\circ \f.
\end{equation}

The following mutually equivalent properties follow immediately from \eqref{albt}
\begin{equation}\label{albt2}
\al\circ \f^2 + \bt\circ \f = \al\circ \f - \bt\circ \f^2 = 0.
\end{equation}

It is well known the following expression of the Lie derivative in terms of the covariant derivative with respect to the Levi-Civita connection $\bar\n$ of 
$\bg$
\begin{equation}\label{L1}
  \left(\LL_{\bar\xi} \bg\right)(X,Y) = \bg\left(\bar\n_X \bar\xi, Y\right)+\bg\left(X, \bar\n_Y \bar\xi\right).
\end{equation}

\section{Main results}

In \cite{Man73}, it is said
that the contact transformed B-metric $\bg$ generates a Yamabe soliton with potential the Reeb vector field $\bar\xi$ and soliton constant $\sm$ on a conformal accR manifold $(\MM,\f,\bar\xi,\bar\eta,\bg)$, if the following condition is satisfied
\begin{equation}\label{YS}
  \frac12 \LL_{\bar\xi} \bg = (\bar\tau - \sm)\bar g,
\end{equation}
where $\bar\tau$ is the scalar curvature of $\bar g$.

A vector field $\vartheta$ on a (pseudo-)Riemannian manifold $(\MM,g)$ is called \emph{torse-form\-ing vector field} if it
satisfies the following condition for arbitrary vector field $x\in \X(\MM)$ 
\begin{equation}\label{tf-v}
	\n_x \vartheta = f\,x + \gm(x)\vartheta,
\end{equation}
where $f$ is a differentiable function and $\gm$ is a 1-form \cite{Sch54,Yano44}.
The 1-form $\gm$ is called the \emph{generating form} and
the function $f$ is called the \emph{conformal scalar} of $\vartheta$ \cite{MihMih13}.

Further, we consider a torse-forming vector field $\vartheta$ on an accR manifold $\M$, \ie \eqref{tf-v} is valid.

In addition, those vector fields that have a special arrangement regarding the structure under consideration are naturally distinguished.
The almost contact structure on $\MM$ gives rise to two mutually orthogonal distributions with respect to the B-metric $g$, namely the contact (or horizontal) distribution
$\mathcal{H}=\ker(\eta)=\im(\f)$ and the vertical distribution $\mathcal{H}^\bot=\Span(\xi)=\ker(\f)$.

For these reasons, we study the case where the torse-forming vector field $\vartheta$ is vertical, \ie $\vartheta\in H^\bot=\Span(\xi)$.
Therefore,
$\vartheta$ is collinear to $\xi$, \ie the following equality holds
\begin{equation}\label{vert}
    \vartheta=k\,\xi,
\end{equation}
where $k$ is a nonzero function on $\MM$ and obviously
$k=\eta(\vartheta)$ holds true.

By virtue of \eqref{tf-v} and  \eqref{vert}, we obtain
\begin{equation}\label{dkxxi}
    \D k(x)\,\xi + k\n_x \xi = fx+k\gm(x)\,\xi,
\end{equation}
which after applying $\eta$ gives
\begin{equation}\label{dk}
    \D k(x) = f\eta(x)+k\gm(x).
\end{equation}
Thus, \eqref{str}, \eqref{dkxxi} and  \eqref{dk} imply
\begin{equation*}\label{nxi}
    \n_x \xi = -\frac{f}{k}\,\f^2x.
\end{equation*}

The last formula due to \eqref{FXieta} gives the following
\[
\left(\n_x\eta\right)(y) = -\frac{f}{k}\,g(\f x,\f y),
\]
\begin{equation}\label{Fxyxi}
    F(x,y,\xi) = -\frac{f}{k}\,g(x,\f y).
\end{equation}

Equality \eqref{Fxyxi} shows that
the following basic properties for the operation of the structure on the considered manifold are fulfilled
\begin{equation}\label{Fxi-prop}
    F(x,y,\xi) = F(y,x,\xi) = -F(\f x,\f y,\xi)= F(\f^2 x,\f^2 y,\xi).
\end{equation}

Using \eqref{tataom}, we obtain the following results for the Lee forms
\begin{equation}\label{Lee}
    \ta(\xi) = 0,\qquad \ta^*(\xi) = 2n\,\frac{f}{k},\qquad \om=0.
\end{equation}
Then, by virtue of \eqref{str} and \eqref{Lee}, the Lee forms of the accR manifold with a vertical torse-forming vector field $\vartheta$ satisfy the following
\begin{equation}\label{Lee-tf}
    \ta = -\ta\circ\f^2,\qquad \ta^* = -\ta^*\circ\f^2 + 2n\,\frac{f}{k}\eta,\qquad \om=0.
\end{equation}

In \cite{HMan}, it is proved that
$\M$ belongs to a certain class $\F_i$ $(i=1,\dots,11)$ from the Ganchev--Mihova--Gribachev classification if and only if
$F$ satisfies the condition $F=F^i$, where the components $F^i$ of
$F$.
The components of $F$ for the classes $\F_1$ and $\F_5$ mentioned in the main result of this paper are as follows
\begin{equation}\label{Fi-Ico-F1F5}
\begin{split}
&F^{1}(x,y,z)=\frac{1}{2n}\left\{g(\f x,\f y)\ta(\f^2 z) +g(x,\f y)\ta(\f z)\right.\\
&\phantom{F^{1}(x,y,z)=\frac{1}{2n}\left\{\right.}
\left.
+g(\f x,\f z)\ta(\f^2 y)+g(x,\f z)\ta(\f y)\right\}, \\[4pt]
&F^{5}(x,y,z)=-\frac{\ta^*(\xi)}{2n}\Bigl\{g(x,\f y)\eta(z)+g(x,\f z)\eta(y)\Bigr\}.
\end{split}
\end{equation}

In addition, it is said that an accR manifold
belongs to a direct sum of two or more basic classes, \ie
$\M\in\F_i\oplus\F_j\oplus\cdots$, if and only if the fundamental
tensor $F$ on $\M$ is the sum of the corresponding components
$F^i$, $F^j$, $\ldots$ of $F$, \ie the following condition is
satisfied $F=F^i+F^j+\cdots$ for  $i,j\in\{1,2,\dots,11\}$, $i\neq j$.

Bearing in mind \eqref{Fxyxi}, \eqref{Fxi-prop} and the expressions for the components $F^i$ from \cite{HMan}, we establish the vanishing of the components for $i\in\{4,6,7,8,9,11\}$,
which means that the common class of the studied accR manifolds is $\F_1\oplus\F_2\oplus\F_3\oplus\F_5\oplus\F_{10}$. Moreover, among the basic classes, only $\F_5$ can contain such manifolds.
Note that $\F_5$-manifolds are counterparts of $\bt$-Kenmotsu
manifolds in the case of almost contact metric manifolds.


Let us consider $\M$ with a vertical torse-forming vector field $\vartheta$ and an accR structure such that it is an $\F_5$-manifold, \ie $F=F^5$ from \eqref{Fi-Ico-F1F5} is valid.
In this case, due to $\ta=\om=0$, $\ta^*=\ta^*(\xi)\eta$ and \eqref{Lee-tf}, we have
\begin{equation*}\label{Lee-tf-F5}
    \ta = 0,\qquad \ta^* = 2n\,\frac{f}{k}\eta,\qquad \om=0.
\end{equation*}
Furthermore, the expression of $\bar F$ through $F$ under accP transformation given in
\cite{Man4}
 takes the form
\begin{equation}\label{ff5}
\begin{aligned}
    \bar{F}(X,Y,Z)= {}-e^{2u}&\Bigl\{\lm(Z)g(\f X,\f Y)
    +\lm(Y)g(\f X,\f Z)
\\[4pt]
    &+\mu(Z)g(X,\f Y)
    +\mu(Y)g(X,\f Z)\Bigr\}
\\[4pt]
    {}+e^{2w}&\eta(X)\Bigl\{\eta(Y)\D w(\f Z)+\eta(Z)\D w(\f
    Y)\Bigr\},
\end{aligned}
\end{equation}
where the 1-forms $\lm$ and $\mu$ are expressed by $\al$ and $\bt$ introduced in \eqref{albt} as follows
\[
\begin{split}
\lm(Z)&=\cos{2v}\,\al(Z)
    +\sin{2v}\,\left\{\bt(Z)+\frac{f}{k}\eta(Z)\right\},\\
\mu(Z)&=\cos{2v}\,\left\{\bt(Z)+\frac{f}{k}\eta(Z)\right\}
    -\sin{2v}\,\al(Z).
\end{split}
\]

From \eqref{ff5}, bearing in mind \eqref{FXieta} and the first two equalities of \eqref{cct}, 
we obtain
\begin{equation}\label{xi0}
\begin{aligned}
    \bar g\left(\bar\n_X \bar\xi, Y\right)=&\ e^{2u-w}\Bigl\{\lm(\xi)g( X,\f Y)-\mu(\xi)g(\f X,\f Y)\Bigr\}
\\[4pt]
    &+e^{w}\eta(X)\D w(\f^2 Y).
\end{aligned}
\end{equation}

Using the last equality of \eqref{cct}, we have the following system of equalities
\begin{equation*}\label{gbarg}
\begin{array}{l}
\bar g(\f \cdot,\f \cdot)=e^{2u}\cos{2v}\,g(\f \cdot,\f \cdot)-e^{2u}\sin{2v}\,g( \cdot,\f \cdot),\\[4pt]
\bar g( \cdot,\f \cdot)=e^{2u}\cos{2v}\,g( \cdot,\f \cdot)+e^{2u}\sin{2v}\,g(\f \cdot,\f \cdot),
\end{array}
\end{equation*}
which can be expressed vice versa in the form
\begin{equation}\label{gbarg2}
\begin{array}{l}
g(\f \cdot,\f \cdot)=e^{-2u}\cos{2v}\,\bar g(\f \cdot,\f \cdot)+e^{-2u}\sin{2v}\,\bar g( \cdot,\f \cdot),\\[4pt]
g( \cdot,\f \cdot)=e^{-2u}\cos{2v}\,\bar g( \cdot,\f \cdot)-e^{-2u}\sin{2v}\,\bar g(\f \cdot,\f \cdot).
\end{array}
\end{equation}

Applying \eqref{gbarg2} in  \eqref{xi0}, we obtain
\begin{equation}\label{xi0bar}
\begin{aligned}
    \bar g\left(\bar\n_X \bar\xi, Y\right)=&\ \al(\bar\xi)\,
    \bar g( X,\f Y)
    -\left\{\bt(\bar\xi)+\frac{f}{k}e^{-w}\right\}\bar g(\f X,\f Y)
\\[4pt]
    &+\bar\eta(X)\D w(\f^2 Y).
\end{aligned}
\end{equation}

Note that according to \eqref{albt}, the following equalities are satisfied
\begin{equation}\label{albtxi}
\al(\bar\xi)=\D v(\bar\xi),\qquad \bt(\bar\xi)=\D u(\bar\xi).
\end{equation}

Combining the expression in \eqref{xi0bar} with \eqref{albt}, \eqref{L1} and \eqref{albtxi} gives the following formula
\begin{equation}\label{Lxi0}
\begin{aligned}
    \left(\LL_{\bar\xi} \bg\right)(X,Y)=&\ 2\left\{\D v(\bar\xi)
    \bar g( X,\f Y)
    -\left[\D u(\bar\xi)+\frac{f}{k}e^{-w}\right]\bar g(\f X,\f Y)\right\}
\\[4pt]
    &+\bar\eta(X)\D w(\f^2 Y)+\bar\eta(Y)\D w(\f^2 X).
\end{aligned}
\end{equation}

\begin{theorem}\label{thm:F0-YS}
An accR manifold of the main class $\F_5$ with a vertical torse-forming vector field $\vartheta$ can be transformed by an accR transformation so that the transformed B-metric is a Yamabe soliton with potential the transformed Reeb vector field
and a soliton constant $\sm$
if and only if the functions $(u,v,w)$ of the used trans\-formation satisfy the conditions
\begin{gather}\label{F5-YS-uv}
    \D u(\xi)=-\frac{f}{k},\qquad
\D v(\xi)=0,\\[4pt]
\label{dw}
  \D w=\D w(\xi)\eta.
\end{gather}

Moreover, the obtained Yamabe soliton has a constant scalar curvature with value $\bar\tau=\sm$ and the obtained accR manifold belongs to the subclass of the main class $\F_1$ determined by the conditions:
\begin{equation}\label{F0F1-YS}
    \bar\ta=2n\left\{\D u\circ\f+\D v\right\}, \qquad
    \bar\ta^*=-2n\left\{\D u\circ\f^2 + \D v\circ\f\right\}, \qquad
    \bar\om= 0.
\end{equation}

As a corollary, if $(u,v)$ are a pair of $\f$-holomorphic functions then the transformed manifold belongs to the special class $\F_0$ of cosymplectic accR manifolds.
\end{theorem}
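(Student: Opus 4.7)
The plan is to extract from the Yamabe soliton equation \eqref{YS} three tensorial conditions on the transformation functions $(u,v,w)$ by matching independent tensor types on the two sides. Using \eqref{Lxi0} on the left and $\bg(X,Y) = -\bg(\f X,\f Y) + \bar\eta(X)\bar\eta(Y)$ on the right (from \eqref{g} applied to $\bg$), the equation $\tfrac12\LL_{\bar\xi}\bg - (\bar\tau-\sm)\bg = 0$ becomes a sum of four symmetric tensors that separate after restriction to the horizontal distribution $\mathcal{H}$ or specialization at $\bar\xi$. Setting $X = Y = \bar\xi$ (and using $\f\bar\xi = 0$) gives $\bar\tau = \sm$, establishing both that $\bar\tau$ is constant and its value. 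Setting only $Y = \bar\xi$ then leaves $\D w(\f^2 X) = 0$, which via $\f^2 = -\I + \eta\otimes\xi$ is the condition \eqref{dw}. Restricting to $X,Y \in \mathcal{H}$, the surviving coefficients of $\bg(X,\f Y)$ and $\bg(\f X,\f Y)$ must vanish, producing $\D v(\xi) = 0$ and $\D u(\xi) = -f/k$, which is \eqref{F5-YS-uv}. The converse is a direct substitution that collapses \eqref{Lxi0} to zero.

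To identify the class of the transformed manifold I apply \eqref{ttbartt} to the $\F_5$-data $\ta = 0$, $\ta^* = 2n(f/k)\eta$, $\om = 0$. The formulas for $\bar\ta$ and $\bar\om$ in \eqref{F0F1-YS} follow at once; for $\bar\om$ this uses $\D w\circ\f = 0$ (from \eqref{dw} and $\eta\circ\f = 0$), and for $\bar\ta^*$ one rewrites $\D u\circ\f^2 = -\D u - (f/k)\eta$ by means of $\D u(\xi) = -f/k$. For the $\F_1$-membership I return to \eqref{ff5}: the $\D w$-correction vanishes identically, and converting the $\lm,\mu$-part to $\bg$-expressions via \eqref{gbarg2} yields
\[
\bar F(X,Y,Z) = -\tfrac{1}{2n}\bigl\{\bg(\f X,\f Y)\bar\ta(Z) + \bg(X,\f Y)\bar\ta^*(Z) + \bg(\f X,\f Z)\bar\ta(Y) + \bg(X,\f Z)\bar\ta^*(Y)\bigr\}.
\]
Using the identities $\bar\ta(\xi) = 0$ and \eqref{albt2}, one checks $\bar\ta\circ\f^2 = -\bar\ta$ and $\bar\ta\circ\f = -\bar\ta^*$, so the formula above is exactly the $F^1$-template of \eqref{Fi-Ico-F1F5}.

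The corollary is a direct consequence: the $\f$-holomorphicity of $(u,v)$ is the relation $\D v = -\D u\circ\f$, which makes $\al = \D u\circ\f + \D v$ identically zero; combined with $\D u(\xi) = -f/k$, applying $\f$ gives $\D v\circ\f = \D u + (f/k)\eta$, so $\bt + (f/k)\eta = 0$. Both $\lm$ and $\mu$ in \eqref{ff5} then vanish, forcing $\bar F = 0$ and hence the transformed manifold lies in $\F_0$.

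The main obstacle I foresee is the tensorial disentangling in the first step. The four symmetric pieces appearing after matching \eqref{Lxi0} against $(\bar\tau-\sm)\bg$ must be recognized as independent, so that one scalar identity becomes the conjunction of \eqref{F5-YS-uv}, \eqref{dw} and $\bar\tau=\sm$. Without this separation --- achieved through the horizontal/vertical decomposition given by the accR structure --- the individual conditions cannot be extracted, and the remaining computation, which is routine bookkeeping with \eqref{ttbartt}, \eqref{albt}, \eqref{albt2} and \eqref{ff5}, has no target to aim at.
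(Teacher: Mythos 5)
Your proposal is correct and follows essentially the same route as the paper: substituting $\bar\xi$ into the soliton identity to get $\bar\tau=\sm$ and $\D w\circ\f^2=0$, then separating the horizontal part to extract \eqref{F5-YS-uv}, and finally matching the transformed tensor $\bar F$ against the $F^1$-template via \eqref{gbarg2}. The only cosmetic difference is that you read off \eqref{F0F1-YS} directly from the preliminary Lee-form formulas \eqref{ttbartt}, whereas the paper recomputes the same relations from the transformed tensor \eqref{ff5=}; the content is identical.
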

\begin{proof}
The condition that $\bar g$ generates a Yamabe soliton with potential $\bar \xi$ 
on a conformal accR manifold $(\MM,\f,\bar\xi,\bar\eta,\bg)$ means that
\eqref{YS} is satisfied. In this case, \eqref{Lxi0} takes the form
\begin{equation}\label{Lxi00}
\begin{aligned}
    &2\left(\bar\tau-\sm\right)\left\{ -\bg(\f X,\f Y)+\bar\eta(X)\bar\eta(Y)\right\}\\[4pt]
    &= 2\left\{\D v(\bar\xi)
    \bar g( X,\f Y)
    -\left[\D u(\bar\xi)+\frac{f}{k}e^{-w}\right]\bar g(\f X,\f Y)\right\}
\\[4pt]
    &\phantom{=} +\bar\eta(X)\D w(\f^2 Y)+\bar\eta(Y)\D w(\f^2 X).
\end{aligned}
\end{equation}

The substitution $\bar\xi$ for $Y$ in \eqref{Lxi00} gives 
\begin{equation}\label{tsDw}
2\left(\bar\tau-\sm\right)\bar\eta(X)=\D w(\f^2 X).
\end{equation}
Then we replace $X$ with $\bar\xi$ and get an expression of the scalar curvature for $\bg$ as follows
\begin{equation}\label{tausigma}
  \bar\tau=\sigma.
\end{equation}
Hence, \eqref{tsDw} implies the vanishing of $\D w\circ\f^2 $, which is equivalent to the condition in
\eqref{dw}.

After applying \eqref{tausigma} and \eqref{dw} to \eqref{Lxi00}, we obtain
\eqref{F5-YS-uv}.

Note that $\LL_{\bar\xi} \bg$ vanishes due to \eqref{tausigma} and \eqref{YS}, \ie $\bar\xi$ is a Killing vector field in the considered case. For more results concerning Killing vector fields on Riemannian manifolds, see \cite{DesBel}.

As a next step, we substitute \eqref{gbarg2} in \eqref{ff5} and get
\begin{equation}\label{ff5=}
\begin{aligned}
    \bar{F}(X,Y,Z)= {}&-\al(Z)\bg(\f X,\f Y)-\left\{\bt(Z)+\frac{f}{k}\eta(Z)\right\}\bg(X,\f Y)
\\[4pt]
    &-\al(Y)\bg(\f X,\f Z)-\left\{\bt(Y)+\frac{f}{k}\eta(Y)\right\}\bg(X,\f Z)
\\[4pt]
    &+\bar\eta(X)\left\{\bar\eta(Y)\D{w}(\f Z)+\bar\eta(Z)\D{w}(\f Y)\right\}.
\end{aligned}
\end{equation}

Then, computing the Lee forms for $\bar F$ in \eqref{ff5=} by \eqref{tataom}, \eqref{albt} and \eqref{albt2}, we obtain the following expressions
\begin{equation}\label{tataAB}
    \al=\frac{1}{2n}\bar\ta, \qquad 
    \bt=\frac{1}{2n}\bar\ta^*-\frac{f}{k}\eta, \qquad 
    \D w\circ\f = \bar\om.
\end{equation}
By virtue of \eqref{albtxi}, \eqref{F5-YS-uv}, \eqref{dw} and \eqref{tataAB}, we obtain
\begin{equation*}\label{tataAB=}
    \bar{\ta}(\bar\xi)=0,\qquad
    \bar{\ta}^*(\bar\xi)=0,\qquad
    \bar{\om}=0,
\end{equation*}
where the first two equalities are equivalent to the following properties, respectively
\begin{equation}\label{tataAB==}
    \bar{\ta}=-\bar{\ta}\circ\f^2,\qquad
    \bar{\ta}^*=-\bar{\ta}^*\circ\f^2.
\end{equation}
Thus, \eqref{albt}, \eqref{F5-YS-uv}, \eqref{dw}, \eqref{tataAB} and \eqref{tataAB==}
imply \eqref{F0F1-YS}.

Substituting \eqref{dw}, \eqref{tataAB} and \eqref{tataAB==}
in \eqref{ff5=}, we get
\begin{equation}\label{ff0bar2}
\begin{aligned}
    \bar{F}(X,Y,Z)= \frac{1}{2n}\Bigl\{&\bg(\f X,\f Y)\,\bar\ta(\f^2 Z) + \bg(X,\f Y)\,\bar\ta^*(\f^2 Z)
\\[4pt]
    &\!
    +\bg(\f X,\f Z)\,\bar\ta(\f^2 Y) + \bg(X,\f Z)\,\bar\ta^*(\f^2 Y)\Bigr\}.
\end{aligned}
\end{equation}
The resulting expression of $\bar F$ in \eqref{ff0bar2} can be written in the form
\begin{equation}\label{F1}
    \bar F=\bar F^1,
\end{equation}
using the corresponding component of $\bar F$ in \eqref{Fi-Ico-F1F5} and the relation in the first equality of \eqref{tataom=id}.
The expression of $\bar F$ in \eqref{F1} means that the transformed manifold $(\MM,\f,\bar\xi,\bar\eta,\bg)$ belongs to the main class $\F_1$, according to the classification of Ganchev--Mihova--Gribachev in \cite{GaMiGr}.

The assumption that $(u,v)$ is a $\f$-holomorphic pair,  \ie $(u,v)$ satisfy the conditions
\[
\D u\circ\f=\D v\circ\f^2, \qquad
\D u\circ\f^2 =- \D v\circ\f,
\]
and \eqref{F0F1-YS} yield the vanishing of all Lee forms of the obtained $\F_1$-manifold, which means that it is in $\F_0$.
\end{proof}

As a result of \eqref{F5-YS-uv} and \eqref{dw}, we derive the following conclusion. The situation of \thmref{thm:F0-YS} occurs when the functions $(u,v,w)$ of the used accR transformation satisfy the conditions:
\begin{itemize}
  \item $v$ is a vertical constant, \ie constant on $\mathcal{H}^\bot$;
  \item $w$ is a horizontal constant, \ie constant on $\mathcal{H}$.
\end{itemize}

\section{Example}

We recall a known example of an $\F_5$-manifold given in \cite{GaMiGr} as Example~3.
The space $\mathbb R^{2n+2}=\left\{(x^1, \dots, x^{n+1}; x^{n+2}, \dots, x^{2n+2})|\ x^i\in\R\right\}$ is considered as a complex Riemannian manifold with the canonical complex structure
$J$ and the metric $G$ defined for any $X=\lm^i \frac{\p}{\p x^i}+\mu^i\frac{\p}{\p x^{n+i+1}}$ by
\[
J \frac{\p}{\p x^i} = \frac{\p}{\p x^{n+i+1}},\quad
J \frac{\p}{\p x^{n+i+1}} = -\frac{\p}{\p x^i},\quad
G(X,X)=-\delta_{ij} \lm^i\lm^j + \delta_{ij} \mu^i\mu^j,
\]
where $\delta_{ij}$ is the Kronecker delta for $i,j\in\{1,\dots,n+1\}$.

The manifold $\M$ is constructed as a hypersurface of $(\mathbb R^{2n+2},\A{}J,\A{}G)$ determined by
\[
G(Z,JZ)=0,\qquad G(Z,Z)=\cosh^2 t, \qquad  t>0,
\]
where $Z$ is the position vector of a point in $\mathbb R^{2n+2}$.

The Reeb vector field is defined by $\xi=-JN$, where $N=\frac{1}{\cosh t}JZ$ is the unit normal of $\MM$ and $N$ is time-like, \ie $G(N,N)=-1$.

The structure tensors $\f$ and $\eta$ are determined by the following condition for any tangent vector field $X$ on $\MM$
\[
JX = \f X +\eta(X) J\xi.
\]

The B-metric $g$ is the restriction of $G$ on $\X(\MM)$.

In \cite{GaMiGr}, it is shown that the constructed manifold $\M$ belongs to $\F_5$ because $F=F^5$, where
\begin{equation}\label{ExYS=Lee}
\ta=0,\qquad \ta^*=\frac{2n}{\cosh t}\eta,\qquad \om=0.
\end{equation}

Now, we define the following functions on $\R^{2n+1}=\left\{\left(x^1, \dots, x^n; \A x^{n+1}, \A \dots,\right.\right.$ $\left.\left.\A x^{2n};\A{} t\right)\right\}$
\begin{equation}\label{ExSl=uvw}
\begin{split}
u&=\frac12\sum_{i=1}^{n}\ln \left\{\left(x^i\right)^2+\left(x^{n+i}\right)^2\right\} + \ell(t),\\[4pt]
v&=\sum_{i=1}^n \arctan \frac{x^i}{x^{n+i}},
\\[4pt]
w&=h(t),
\end{split}
\end{equation}
where $\ell(t)$ is an arbitrary twice differentiable function on $\R$ such that $\ell'\neq  0$ and $h(t)$ is an arbitrary differentiable function on $\R$.
Then, we apply an accR transformation determined by these functions $(u,v,w)$.

We obtain the expressions of their partial derivatives for $i\in\{1,2,\dots, n\}$ as follows:
\begin{equation*}\label{ExSl=d}
\begin{array}{ll}
\ddu{i}=-\ddv{n+i}=\dfrac{x^i}{(x^i)^2+(x^{n+i})^2},
\qquad &
\dfrac{\p u}{\p t}= \ell', \\[9pt]
\ddu{n+i}=\ddv{i}=\dfrac{x^{n+i}}{(x^i)^2+(x^{n+i})^2},
\qquad & \dfrac{\p w}{\p t}=h'.
\end{array}
\end{equation*}
Then we find that the functions from \eqref{ExSl=uvw} have the properties
\begin{equation}\label{exYS=dd}
\D u\circ \f =\D v,\qquad \D u(\xi)=\ell',\qquad \D v(\xi)=0,\qquad \D w=h'\eta.
\end{equation}

As a next step, we apply an accR transformation with the functions $(u,v,w)$ defined by \eqref{ExSl=uvw}.

Taking into account \eqref{albt}, \eqref{ttbartt},  \eqref{ExYS=Lee} and \eqref{exYS=dd}, we obtain the corresponding Lee forms, which coincide with the results in \eqref{F0F1-YS} if and only if the following condition is valid
\[
\ell'=-\frac{1}{\cosh t}.
\]
A solution of this differential equation  is
\[
\ell=-\arctan(\sinh t).
\]
Then, the Lee forms of the obtained manifold are determined by
\begin{equation*}\label{ttbartt0=ex}
    \bar{\ta} = 4n\, \D u\circ \f,
    \qquad
    \bar{\ta}^* = -4n\, \D v\circ \f,
    \qquad
    \bar{\om}=0.
\end{equation*}

Thus, the transformed manifold $(\MM,\f,\bar\xi,\bar\eta,\bar g)$ is an $\F_1$-manifold with a Yamabe soliton with potential $\bar\xi$ and a constant scalar curvature $\bar\tau=\sm$, according ot \thmref{thm:F0-YS}.

\section*{Conclusions}

Steady-state solutions of geometric flows, in particular Yamabe solitons and the like are still a very current topic in differential geometry. The paper reveals new properties of these types of solitons in a contact conformally transformed manifold of an understudied type having a vertical torse-forming potential by considering the interrelation between the original and the deformed structure. Since Yamabe solitons are not yet sufficiently studied, any contribution in this direction may bring new perspectives on the geometry of the manifold.

\vspace{6pt}


%



\end{document}